\setlist[itemize]{leftmargin=*}
\newtheorem{theorem}{Theorem}[section]
\newtheorem{corollary}[theorem]{Corollary}
\newtheorem{remark}[theorem]{Remark}
\newcommand\C{\mathbb{C}}
\renewcommand\coth{\operatorname{coth}}
\renewcommand\epsilon{\varepsilon}
\newcommand\K{\mathcal{K}}
\newcommand\N{\mathbb{N}}
\renewcommand\phi{\varphi}
\renewcommand\Re{\operatorname{Re}}
\newcommand\R{\mathbb{R}}
\newcommand\sign{\operatorname{sign}}
\newcommand\ssb{\hspace{-.25mm}}
\newcommand\ssf{\hspace{.25mm}}
\renewcommand\tanh{\operatorname{tanh}}
\newcommand\V{\mathcal{V}}
\newcommand\Vt{\mathcal{V}^{\ssf t}}
\newcommand\vsb{\hspace{-.1mm}}
\newcommand\vsf{\hspace{.1mm}}
\title[Intertwining operator in 1D trigonometric Dunkl theory]
{An elementary proof of the positivity\\
of the intertwining operator\\
in one--dimensional trigonometric Dunkl theory}
\author[Anker]
{Jean--Philippe Anker}
\address{Universit\'e d'Orl\'eans \& CNRS,
F\'ed\'eration Denis Poisson (FR 2964),
Laboratoire MAPMO (UMR 7349),
B\^atiment de Math\'ematiques,
B.P.~6759, 45067 Orl\'eans cedex 2, France}
\email{anker@univ-orleans.fr}
\thanks{Work partially supported by the regional project MADACA
(Marches Al\'eatoires et processus de Dunkl\;--\;Approches
Combinatoires et Alg\'ebriques, www.fdpoisson.fr/madaca).}
\subjclass[2010]{33C67}
\keywords{Trigonometric Dunkl theory, intertwining operator}
\begin{document}

\begin{abstract}
This note is devoted to the intertwining operator
in the one--dimensional trigonometric Dunkl setting.
We obtain a simple integral expression of this operator
and deduce its positivity.
\end{abstract}

\maketitle
\vspace{-2mm}

\centerline{\it
To appear in Proc. Amer. Math. Soc.}
\bigskip

\section{Introduction}

We use the lecture notes \cite{Opdam2000}
as a general reference about trigonometric Dunkl theory.
In dimension 1, this special function theory is
a deformation of Fourier analysis on \ssf$\R\ssf$,
depending on two complex parameters \ssf$k_1$ and \ssf$k_2$\vsf,
where the classical derivative is replaced by the Cherednik operator
\begin{align*}
D\ssb f(x)
&=\bigl(\tfrac{d\vphantom{|}}{d\vsf x\vphantom{|}}\bigr)f(x)
+\bigl\{\tfrac{k_1\vphantom{|}}{1\ssf-\,e^{-x}}\ssb
+\ssb\tfrac{2\,k_2\vphantom{|}}{1\ssf-\,e^{-2\ssf x}}\bigr\}\,
\bigl\{\ssf f(x)\!-\!f(-x)\ssf\bigr\}
-\bigl(\tfrac{k_1\vphantom{|}}2\!+\hspace{-.5mm}k_2\bigr)\ssf f(x)\\
&=\bigl(\tfrac{d\vphantom{|}}{d\vsf x\vphantom{|}}\bigr)f(x)
+\bigl\{\ssf\tfrac{k_1\vsb+\ssf k_2\vphantom{|}}{2\vphantom{|}}\ssf
\coth\tfrac{x\vphantom{|}}{2\vphantom{|}}\ssb
+\ssb\tfrac{k_2\vphantom{|}}{2\vphantom{|}}\ssf
\tanh\tfrac{x\vphantom{|}}{2\vphantom{|}}\ssf\bigr\}\,
\bigl\{\ssf f(x)\!-\!f(-x)\ssf\bigr\}
-\bigl(\tfrac{k_1\vphantom{|}}2\!+\hspace{-.5mm}k_2\bigr)\ssf f(-x)\,,
\end{align*}
the Lebesgue measure by \ssf$A(x)\hspace{.5mm}dx$\ssf, where
\begin{equation*}
A(x)\vsb=|\ssf2\vsf\sinh\ssb\tfrac{x\vphantom{|}}{2\vphantom{|}}
\ssf|^{2\ssf k_1}\hspace{.5mm}
|\ssf2\vsf\sinh\vsb x\ssf|^{2\ssf k_2}\,,
\end{equation*}
and the exponential function \ssf$e^{\ssf i\ssf\lambda\ssf x}$
by the Opdam hypergeometric function
\begin{equation*}
\hspace{-15mm}G_{\vsf i\vsf\lambda}(x)
=\hspace{-20.25mm}\underbrace{
\phi_{\,2\ssf\lambda}^{\ssf k_1\ssb+\ssf k_2-\frac12,\hspace{.5mm}k_2-\frac12}
(\tfrac{x\vphantom{|}}{2\vphantom{|}})
}_{{}_2\vsf\text{\rm F}_{\ssb1\vsb}(
\frac{k_1}2\vsb+\vsf k_2+\ssf i\vsf\lambda\vsf,
\frac{k_1}2\vsb+\vsf k_2-\ssf i\vsf\lambda\hspace{.2mm};
\ssf k_1\ssb+\vsf k_2\vsb+\vsb\frac12\vsf;
\ssf-\sinh^2\hspace{-.5mm}\frac x2)\hspace{8mm}}\hspace{-20.5mm}
+\,\tfrac{\frac{k_1}2+\ssf k_2+\ssf i\ssf\lambda\vphantom{\frac0|}}
{2\ssf k_1\vsb+\ssf2\ssf k_2+1\vphantom{\frac|0}}\,(\sinh x)\,
\hspace{-21.75mm}\underbrace{
\phi_{\,2\ssf\lambda}^{\ssf k_1\ssb+\ssf k_2+\frac12,\hspace{.5mm}k_2+\frac12}
(\tfrac{x\vphantom{|}}{2\vphantom{|}})
}_{{}_2\vsf\text{\rm F}_{\ssb1\vsb}(
\frac{k_1}2\vsb+\vsf k_2+1\vsb+\ssf i\vsf\lambda\vsf,
\frac{k_1}2\vsb+\vsf k_2+1\vsb-\ssf i\vsf\lambda\hspace{.2mm};
\ssf k_1\ssb+\vsf k_2\vsb+\vsb\frac32\vsf;
\ssf-\sinh^2\hspace{-.5mm}\frac x2)\hspace{2mm}}\hspace{-20.75mm}.
\end{equation*}
Here \hspace{.5mm}$\phi_\lambda^{\alpha,\beta}(x)$
\ssf denotes the Jacobi function and
\ssf${}_2\ssf\text{\rm F}_{\ssb1}(a\ssf,\ssb b\ssf;\ssb c\ssf;\ssb Z\vsf)$
\ssf the classical hypergeometric function.
\smallskip

In a series of papers
(\cite{GallardoTrimeche2010}, \cite{Mourou2010},
\cite{Trimeche2010}, \cite{GallardoTrimeche2012},
\cite{Trimeche2012}, \cite{Trimeche2015a},
\cite{Trimeche2015b}, \cite{Trimeche2016}, \dots ),
Trim\`eche and his collaborators have studied an intertwining operator
\hspace{.5mm}$\V\hspace{-.5mm}:\hspace{-.5mm}
C^{\vsf\infty\vsb}(\R)\ssb\longrightarrow\ssb C^{\vsf\infty\vsb}(\R)$\ssf,
which is characterized by
\begin{equation*}
\V\ssf\circ\bigl(\tfrac{d\vphantom{|}}{d\vsf x\vphantom{|}}\bigr)\vsb
=D\circ\ssf\V
\quad\text{and}\quad
\delta_{\vsf0}\ssb\circ\V=\delta_{\vsf0}\,,
\end{equation*}
and the dual operator \hspace{.5mm}$\Vt\!:\hspace{-.5mm}
C_c^{\vsf\infty\vsb}(\R)\ssb\longrightarrow\ssb C_c^{\vsf\infty\vsb}(\R)$\ssf,
which satisfies
\begin{equation*}
\int_{-\infty}^{\ssf+\infty}\hspace{-1mm}\V\ssb f(x)\,g(x)\,A(x)\,dx\ssf
=\int_{-\infty}^{\ssf+\infty}\hspace{-1mm}
f(y)\,\Vt\hspace{-.4mm}g(y)\,dy\,.
\end{equation*}
Let us mention in particular the following facts.
\begin{itemize}

\item
\textit{Eigenfunctions\/}.
For every \ssf$\lambda\hspace{-.6mm}\in\hspace{-.5mm}\C$\ssf,
\begin{equation*}
\V\ssf(\ssf x\ssb\longmapsto\ssb e^{\ssf i\ssf\lambda\ssf x}\ssf)\vsb
=G_{\vsf i\vsf\lambda}\,.
\end{equation*}

\item
\textit{Explicit expression\/}.
An integral representation of \hspace{.5mm}$\V$ \ssf was computed
in \cite{GallardoTrimeche2010} (and independently in \cite{Ayadi2011}),
under the assumption that \ssf$k_1\hspace{-.8mm}\ge\hspace{-.5mm}0$\ssf,
\ssf$k_2\!\ge\hspace{-.5mm}0$ \ssf with
\ssf$k_1\hspace{-.8mm}+\hspace{-.5mm}k_2\!>\hspace{-.5mm}0$\ssf.

\item
\textit{Analytic continuation\/}.
It was shown in \cite{GallardoTrimeche2012} that
the intertwining operator \ssf$\V$ extends meromorphically
with respect to \ssf$k\hspace{-.5mm}\in\hspace{-.5mm}\C^2$,
with singularities in
\ssf$\{\hspace{.4mm}k\hspace{-.5mm}\in\hspace{-.5mm}\C^2\hspace{.5mm}|\,
k_1\hspace{-.9mm}+\hspace{-.5mm}k_2\!+\!\frac12\!\in\!-\ssf\N\ssf\}$\ssf.

\item
\textit{Positivity\/}.
On the one hand, the positivity of \hspace{.5mm}$\V$
\ssf was disproved in \cite{GallardoTrimeche2010},
by using the above--mentioned expression of \hspace{.5mm}$\V$ \ssf in the case
\ssf$k_1\hspace{-.8mm}>\hspace{-.5mm}0$\ssf, $k_2\!>\hspace{-.5mm}0$\ssf.
On the other hand, the positivity of \hspace{.5mm}$\V$
\ssf was investigated in \cite{Trimeche2012}, \cite{Trimeche2015a},
\cite{Trimeche2015b}, \cite{Trimeche2016}
by using the positivity of a heat type kernel in the case
\ssf$k_1\hspace{-.8mm}\ge\hspace{-.5mm}0$\ssf, $k_2\!\ge\hspace{-.5mm}0$\ssf.

\end{itemize}

In Section 2, we obtain an integral representation of \hspace{.5mm}$\V$ \ssf
and \hspace{.5mm}$\Vt$ when \ssf$\Re k_1\hspace{-.8mm}>\hspace{-.5mm}0$
\ssf and \ssf$\Re k_2\!>\hspace{-.5mm}0\ssf$.
The expression is simpler and the proof is quicker than the previous ones in
\cite{GallardoTrimeche2010} or \cite{Ayadi2011}.
In Section 3, we deduce the positivity
of \hspace{.5mm}$\V$ \ssf and \hspace{.5mm}$\Vt$
when \ssf$k_1\hspace{-.8mm}>\hspace{-.5mm}0$\ssf, $k_2\!>\hspace{-.5mm}0$\ssf,
and comment on the positivity issue.

\section{Integral representation of the intertwining operator}

In this section,
we resume the computations in \cite[\!Section\;2\vsf]{GallardoTrimeche2010}
and prove the following result.

\begin{theorem}\label{MainTheorem}
Let \,$k\hspace{-.5mm}=\hspace{-.5mm}(k_1,\ssb k_2)
\hspace{-.5mm}\in\hspace{-.4mm}\C^2$
with \,$\Re k_1\hspace{-.8mm}>\hspace{-.5mm}0$
and \,$\Re k_2\!>\hspace{-.5mm}0$\ssf.
Then
\vspace{-1mm}
\begin{equation*}
\V f(x)=\int_{\ssf|y|<|x|}\!\K(x,y)\,f(y)\,dy
\qquad\forall\;x\hspace{-.5mm}\in\hspace{-.5mm}\R^*
\end{equation*}
\vspace{-3mm}

and
\vspace{-1mm}
\begin{equation*}
\Vt\ssb g\vsf(y)=\int_{\ssf|x|>|y|}\!\K(x,y)\,g(x)\,A(x)\,dx\,,
\end{equation*}
\vspace{-3mm}

where
\begin{equation}\begin{aligned}\label{IntegralExpression}
\K(x,y)=\tfrac{c\vphantom{|}}{4\vphantom{|}}\hspace{.5mm}A(x)^{-1}\ssf
&\smash{\int_{\,|y|}^{\,|x|}}
\sigma(x\vsf,\vsb y\vsf,\ssb z)\,
(\cosh\tfrac z2\hspace{-.5mm}-\ssb\cosh\tfrac y2\vsf)^{\vsf k_1\vsb-1}
\hspace{.5mm}(\cosh x\ssb-\ssb\cosh z\vsf)^{\vsf k_2-1}
\vphantom{\Big|}\\
&\hspace{6mm}\times
(\vsf\sinh\tfrac z2\vsf)\,dz\,,
\vphantom{\Big|}
\end{aligned}\end{equation}
\vspace{-1.5mm}

with
\vspace{-.5mm}
\begin{equation}\label{constantc}
c\ssf=\ssf2^{\ssf3\ssf k_1\vsb+\ssf3\ssf k_2}\hspace{.5mm}
\smash{\tfrac{\Gamma(k_1\vsb+\ssf k_2+\frac12)\vphantom{\frac0|}}
{\sqrt{\pi\ssf}\,\Gamma(k_1)\,\Gamma(k_2)\vphantom{\frac|0}}}
\end{equation}
and
\begin{equation}\label{sigmaxyz}
\sigma(x\vsf,\vsb y\vsf,\ssb z)=(\vsf\sign x)\,
\bigl\{\ssf e^{\ssf\frac x2}\hspace{.5mm}(\vsf2\cosh\tfrac x2\vsf)\ssb
-e^{-\frac y2}\hspace{.5mm}(\vsf2\cosh\tfrac z2\vsf)\bigr\}\,.
\end{equation}
\end{theorem}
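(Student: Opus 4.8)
The plan is to exploit the characterization recalled in the introduction: the intertwining operator is the unique map satisfying $\V\circ(\tfrac{d}{dx})=D\circ\V$ and $\delta_0\circ\V=\delta_0$. Writing $\widetilde\V$ for the candidate operator $\widetilde\V f(x)=\int_{|y|<|x|}\K(x,y)\,f(y)\,dy$ furnished by the right--hand side, it therefore suffices to check that $\widetilde\V$ enjoys these two properties; uniqueness then forces $\widetilde\V=\V$. Since $\Re k_1>0$ and $\Re k_2>0$, the exponents $k_1-1$ and $k_2-1$ have real part $>-1$, so the integrand in \eqref{IntegralExpression} is integrable near $z=|y|$ and $z=|x|$ and $\widetilde\V f$ is a well--defined smooth function on $\R^*$. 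The dual formula is then a free consequence: once $\V=\widetilde\V$ is known, Fubini's theorem applied to $\int\V f\,g\,A\,dx=\int f\,\Vt g\,dy$ interchanges the order of integration over the region $\{|y|<|x|\}$ and yields exactly $\Vt g(y)=\int_{|x|>|y|}\K(x,y)\,g(x)\,A(x)\,dx$; the matching supports are precisely what make this work.

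For the normalization $\lim_{x\to 0}\widetilde\V f(x)=f(0)$, I would rescale $y=xu$ and $z=xv$ in \eqref{IntegralExpression} and let $x\to 0$. Using $\cosh\tfrac z2-\cosh\tfrac y2\sim\tfrac18(z^2-y^2)$, $\cosh x-\cosh z\sim\tfrac12(x^2-z^2)$, $A(x)\sim 2^{2k_2}|x|^{2k_1+2k_2}$, and the leading behaviour $\sigma(x,y,z)\sim(\sign x)(x+y)$, every power of $x$ cancels and the limit reduces to a double integral in $u$ and $v$ that factors as a product of two Beta integrals (equivalently, two Gamma quotients). A direct evaluation shows this constant equals $1$ exactly when $c$ has the value \eqref{constantc}; in other words the Gamma factors in $c$ are dictated by the normalization, and by continuity of $f$ the same computation gives $\widetilde\V f(x)\to f(0)$.

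The heart of the proof is the intertwining relation $D(\widetilde\V f)=\widetilde\V(f')$. Because the domain $\{|y|<|x|\}$ is invariant under $x\mapsto -x$, it is enough (up to a parity argument carried by the factor $\sign x$ in $\sigma$) to argue for $x>0$. I would first integrate by parts in $y$ in $\widetilde\V(f')=\int\K(x,y)f'(y)\,dy$, producing the interior term $-\int\partial_y\K(x,y)\,f(y)\,dy$ together with boundary contributions at $y=\pm|x|$. On the other side, differentiating $\widetilde\V f$ in $x$ produces $\int\partial_x\K(x,y)\,f(y)\,dy$ plus boundary contributions from the moving endpoints $\pm|x|$, while the remaining coth, tanh and reflection terms of $D$ contribute $\int[\,\{\tfrac{k_1+k_2}2\coth\tfrac x2+\tfrac{k_2}2\tanh\tfrac x2\}(\K(x,y)-\K(-x,y))-(\tfrac{k_1}2+k_2)\K(-x,y)\,]f(y)\,dy$. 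Matching the two sides thus splits into the \emph{interior identity}
\begin{equation*}
\begin{aligned}
&\partial_x\K(x,y)+\Bigl\{\tfrac{k_1+k_2}2\coth\tfrac x2+\tfrac{k_2}2\tanh\tfrac x2\Bigr\}\bigl(\K(x,y)-\K(-x,y)\bigr)\\
&\qquad-\bigl(\tfrac{k_1}2+k_2\bigr)\K(-x,y)=-\,\partial_y\K(x,y),
\end{aligned}
\end{equation*}
valid for $|y|<|x|$, together with the matching of the two sets of boundary terms at $y=\pm|x|$.

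I expect the interior identity to be the main obstacle. To establish it I would substitute the explicit kernel \eqref{IntegralExpression} and integrate by parts in the inner variable $z$: differentiating the factors $(\cosh\tfrac z2-\cosh\tfrac y2)^{k_1-1}$ and $(\cosh x-\cosh z)^{k_2-1}$ reproduces the $\partial_y$ and $\partial_x$ derivatives of the kernel, while the weight $\sinh\tfrac z2\,dz$ and the function $\sigma$ of \eqref{sigmaxyz} are chosen exactly so that the boundary terms at $z=|y|$ and $z=|x|$ cancel the endpoint contributions and reduce the whole identity to an elementary algebraic relation among $\sigma$, $\cosh$ and $\sinh$. The companion boundary matching at $y=\pm|x|$ then follows from the behaviour of $\K(x,y)$ as $y\to\pm|x|$, again governed by the exponents $k_1-1$, $k_2-1$ and the form of $\sigma$. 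Conceptually, \eqref{IntegralExpression} is a composition of two fractional integrals, of orders $k_1$ and $k_2$, and the intertwining property reflects the classical fact that such integrals conjugate the plain derivative into the singular part of $D$; an alternative route, closer to \cite{GallardoTrimeche2010}, is to verify the eigenfunction identity $\widetilde\V(x\mapsto e^{i\lambda x})=G_{i\lambda}$ directly, evaluating the $y$-- and $z$--integrals by the Euler integral representation of ${}_2F_1$, which reproduces the two hypergeometric terms of $G_{i\lambda}$ displayed in the introduction.
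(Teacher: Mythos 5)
Your route is genuinely different from the paper's, but as written it is a plan rather than a proof: the steps that carry the mathematical content are asserted, not executed. The paper never verifies the characterizing properties of $\V$ at all; it starts from the known integral representation of $\V$ due to Gallardo--Trim\`eche and Mourou, whose kernel \eqref{MourouFormula} is expressed through Koornwinder's explicit Jacobi kernel \eqref{ExpressionK} and its antiderivative \eqref{Expression1Ktilde}, and reaches \eqref{IntegralExpression} by one integration by parts, one differentiation and a substitution. You instead propose to check directly that the candidate operator $\widetilde\V$ satisfies $\widetilde\V\circ\frac{d}{dx}=D\circ\widetilde\V$ and $\delta_0\circ\widetilde\V=\delta_0$ and then invoke uniqueness; the dual formula via Fubini is indeed immediate once the first formula is established. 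But the two pillars of this verification are missing. The ``interior identity'' $D_x\K(x,y)=-\partial_y\K(x,y)$ is exactly where the specific form of $\sigma$ in \eqref{sigmaxyz} matters, and you only say that you \emph{would} integrate by parts in $z$ and that the boundary terms \emph{should} cancel; no algebraic identity is actually derived. Likewise the normalization is reduced to a double Beta-type integral that you claim equals $1$ precisely for the constant \eqref{constantc}, without computing it. Since the theorem is essentially the statement that these two computations work out for this particular $\sigma$ and $c$, asserting them is not a proof.

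Even granting the algebra, two analytic points you wave away are delicate. Near the boundary one has $\K(x,y)\asymp(|x|-|y|)^{k_1+k_2-1}$, so for $\Re(k_1+k_2)<1$ the kernel blows up as $|y|\to|x|$: the boundary terms of your integration by parts in $y$, and the endpoint contributions $\K(x,\pm|x|)$ coming from differentiating the moving limits, are then individually divergent rather than matching finite quantities; moreover, differentiating under the integral sign in $x$ produces the power $(\cosh x-\cosh z)^{k_2-2}$, which is not integrable near $z=|x|$ when $\Re k_2<1$, so $\partial_x\K$ cannot be computed naively and some regularization or analytic continuation in $k$ is required --- none is indicated. Second, to invoke the uniqueness characterization of $\V$ on $C^{\vsf\infty\vsb}(\R)$ you must show that $\widetilde\V f$ extends \emph{smoothly} through $x=0$; your scaling argument only yields continuity at the origin with the correct value $f(0)$. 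These gaps are fillable in principle, and your alternative suggestion --- verifying $\widetilde\V(x\mapsto e^{\ssf i\ssf\lambda\ssf x})=G_{\vsf i\vsf\lambda}$ via Euler's integral representation of the hypergeometric function --- is in fact closer to what \cite{GallardoTrimeche2010} and \cite{Ayadi2011} actually carry out, but as it stands the proposal leaves unproved precisely the assertions that constitute the theorem.
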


\begin{proof}
As observed in \cite{GallardoTrimeche2010} and \cite{Mourou2010},
\vspace{-1mm}
\begin{equation*}
\V f(x)=\int_{-|x|}^{\vsf+|x|}\!\K(x,y)\,f(y)\,dy
\end{equation*}
is an integral operator, whose kernel
\begin{equation}\label{MourouFormula}
\K(x,y)=\tfrac14\,K(\tfrac x2,\tfrac y2)
+(\vsf\sign x)\hspace{.5mm}\bigl(\tfrac{k_1\vphantom{|}}4\hspace{-.5mm}
+\hspace{-.5mm}\tfrac{k_2\vphantom{|}}2\bigr)\hspace{.5mm}A(x)^{-1}\ssf
\widetilde{K}(\tfrac x2,\tfrac y2)\vsb
-(\vsf\sign x)\,\tfrac12\,A(x)^{-1}\hspace{.5mm}
\tfrac{\partial}{\partial y}\,\widetilde{K}(\tfrac x2,\tfrac y2)
\end{equation}
\vspace{-4mm}

can be expressed in terms of the kernel
\begin{equation}\label{ExpressionK}\begin{aligned}
K(x,y)
&=\ssf2\,c\,A(2\ssf x)^{-1}\hspace{.5mm}|\ssb\sinh2\ssf x\ssf|
\vphantom{\Big|}\\
&\ssf\times\smash{\int_{\,|y|}^{\,|x|}}\ssb
(\cosh z\ssb-\ssb\cosh y\vsf)^{\vsf k_1-1}\hspace{.5mm}
(\cosh2\ssf x\ssb-\ssb\cosh2\ssf z\vsf)^{\vsf k_2-1}\hspace{.5mm}
(\vsf\sinh z)\,dz\vphantom{\Big|}
\end{aligned}\end{equation}
\vspace{0mm}

of the intertwining operator in the Jacobi setting
(see \cite[\!Subsection\;5.3\vsf]{Koornwinder1984}\vsf)
and of its
\linebreak
integral
\begin{equation}\label{Expression1Ktilde}\begin{aligned}
\widetilde{K}(x,y)
&=\smash{\int_{\,|y|}^{\ssf|x|}}\!K(w,y)\,A(2\ssf w)\,dw\vphantom{\Big|_|}\\
&=\tfrac{c\vphantom{|}}{k_2\vphantom{|}}\,\smash{\int_{\,|y|}^{\,|x|}}\ssb
(\cosh z\ssb-\ssb\cosh y\vsf)^{\vsf k_1-1}\hspace{.5mm}
(\cosh2\ssf x\ssb-\ssb\cosh2\ssf z\vsf)^{\vsf k_2}\hspace{.5mm}
(\vsf\sinh z)\,dz\,.\vphantom{\Big|^|}
\end{aligned}\end{equation}
\vspace{0mm}

Let us integrate by parts \eqref{Expression1Ktilde}
and differentiate the resulting expression with respect to \ssf$y\vsf$.
This way, we obtain
\vspace{1mm}
\begin{equation}\label{Expression2Ktilde}
\widetilde{K}(x,y)
=\tfrac{4\hspace{.5mm}c\vphantom{|}}{k_1\vphantom{|}}
\hspace{.5mm}\smash{\int_{\,|y|}^{\,|x|}}\ssb
(\cosh z\ssb-\ssb\cosh y\vsf)^{\vsf k_1}\hspace{.5mm}
(\cosh2\ssf x\ssb-\ssb\cosh2\ssf z\vsf)^{\vsf k_2-1}\hspace{.5mm}
(\cosh z\vsf)\,(\vsf\sinh z\vsf)\,dz\,,\vphantom{\Big|^|}
\end{equation}
\vspace{-2.5mm}

and
\vspace{-.5mm}
\begin{equation}\begin{aligned}\label{DerivativeKtilde}
\tfrac{\partial}{\partial y}\hspace{.5mm}\widetilde{K}(x,y)
=-\,4\,c\,(\vsf\sinh y\vsf)\,\smash{\int_{\,|y|}^{\,|x|}}\ssb
&(\cosh z\ssb-\ssb\cosh y\vsf)^{\vsf k_1\vsb-1}\hspace{.5mm}
(\cosh2\ssf x\ssb-\ssb\cosh2\ssf z\vsf)^{\vsf k_2-1}\vphantom{\Big|^|}\\
\times\;&(\cosh z\vsf)\,(\vsf\sinh z\vsf)\,dz\,.\vphantom{\Big|^|}
\end{aligned}\end{equation}
We conclude by substituting
\eqref{ExpressionK}, \eqref{Expression1Ktilde},
\eqref{Expression2Ktilde}, \eqref{DerivativeKtilde}
in \eqref{MourouFormula}
and more precisely \eqref{Expression1Ktilde},
respectively \eqref{Expression2Ktilde} in
\vspace{-2mm}
\begin{equation*}
(\vsf\sign x)\,\tfrac{k_2\vphantom{|}}{2\vphantom{|}}\hspace{.5mm}
A(x)^{-1}\ssf\widetilde{K}(\tfrac x2,\tfrac y2)\,,
\quad\text{respectively}\quad
(\vsf\sign x)\,\tfrac{k_1\vphantom{|}}{4\vphantom{|}}\hspace{.5mm}
A(x)^{-1}\ssf\widetilde{K}(\tfrac x2,\tfrac y2)\,.
\end{equation*}
\vspace{-7mm}

\end{proof}

\begin{remark}
Let \,$x,y\!\in\!\R$ such that \,$|x|\!>\!|y|$\ssf.
The expression \eqref{IntegralExpression} extends meromorphically
with respect to \ssf$k\hspace{-.5mm}\in\hspace{-.5mm}\C^2$,
with singularities in
\,$\{\hspace{.4mm}k\hspace{-.5mm}\in\hspace{-.5mm}\C^2\hspace{.5mm}|\,
k_1\hspace{-.9mm}+\hspace{-.5mm}k_2\!+\!\frac12\!\in\!-\ssf\N\ssf\}$\ssf,
produced by the factor
\,$\Gamma(k_1\hspace{-1mm}+\hspace{-.5mm}k_2\!+\!\frac12)$
in \eqref{constantc}.
In the limit cases where either \,$k_1$ \ssb or \,$k_2$ vanishes,
\eqref{IntegralExpression} reduces to the following expressions,
already obtained in \ssf\cite{GallardoTrimeche2010}
\ssb and \ssf\cite{Ayadi2011}\,$:$
\begin{itemize}

\item
Assume that \,$k_1\hspace{-1mm}=\hspace{-.5mm}0$
and \,$\Re k_2\!>\hspace{-.5mm}0$\ssf. Then
\begin{equation}\label{IntegralExpressionLimit1}
\K(x,y)=2^{\ssf k_2-1}\hspace{.5mm}
\tfrac{\Gamma(k_2+\frac12)\vphantom{\frac0|}}
{\sqrt{\pi\ssf}\,\Gamma(k_2)\vphantom{\frac|0}}\,
|\ssb\sinh x\ssf|^{-2\ssf k_2}\hspace{.5mm}
(\cosh x\ssb-\ssb\cosh y\vsf)^{\vsf k_2-1}\hspace{.5mm}
(\vsf\sign x)\,(e^{\ssf x}\!-\ssb e^{-\ssf y}\ssf)\,.
\end{equation}

\item
Assume that \,$k_2\!=\hspace{-.5mm}0$
and \,$\Re k_1\hspace{-1mm}>\hspace{-.5mm}0$\ssf.
Then
\begin{equation}\label{IntegralExpressionLimit2}
\K(x,y)=2^{\ssf k_1\vsb-2}\hspace{.5mm}
\tfrac{\Gamma(k_1\vsb+\frac12)\vphantom{\frac0|}}
{\sqrt{\pi\ssf}\,\Gamma(k_1)\vphantom{\frac|0}}\,
|\ssb\sinh\tfrac x2\ssf|^{-2\ssf k_1}\hspace{.5mm}
(\cosh\tfrac x2\hspace{-.5mm}-\ssb\cosh\tfrac y2)^{\vsf k_1\vsb-1}\hspace{.5mm}
(\vsf\sign x)\,(e^{\ssf\frac x2}\!-\ssb e^{-\ssf\frac y2})\,.
\end{equation}

\end{itemize}
\end{remark}

\section{Positivity of the intertwining operator}

\begin{corollary}
Assume that \,$k_1\hspace{-1mm}>\!0$ and \,$k_2\!>\!0$\ssf.
Then the kernel \eqref{IntegralExpression} is strictly positive,
for every \,$x,\vsb y\hspace{-.5mm}\in\hspace{-.5mm}\R$
such that \,$|x|\!>\!|y|$\ssf.
Hence the intertwining operator \,$\V$ \ssf and its dual \,$\Vt$ are positive.
\end{corollary}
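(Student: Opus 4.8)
The plan is to read off the sign of every factor appearing in the integrand of the integral representation \eqref{IntegralExpression} on the integration interval $|y|<z<|x|$, show that each is strictly positive, and conclude that the integral of a strictly positive (and integrable) function over a non-degenerate interval is strictly positive. Since the prefactor is positive as well, this forces $\K(x,y)>0$, and positivity of $\V$ and $\Vt$ then follows immediately.

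First I would dispose of the elementary factors. Because $k_1,k_2>0$, the constant $c$ in \eqref{constantc} is a product and ratio of strictly positive quantities, so $c>0$; and $A(x)>0$ for $x\in\R^*$, so the prefactor $\tfrac c4\ssf A(x)^{-1}$ is positive. On the open interval $|y|<z<|x|$ one has $z>0$, hence $\sinh\tfrac z2>0$; moreover $\cosh$ is even and strictly increasing on $[0,\infty)$, so $z>|y|$ gives $\cosh\tfrac z2>\cosh\tfrac y2$ and $z<|x|$ gives $\cosh z<\cosh x$. As the exponents $k_1-1$ and $k_2-1$ are real, the powers $(\cosh\tfrac z2-\cosh\tfrac y2)^{k_1-1}$ and $(\cosh x-\cosh z)^{k_2-1}$ of these positive bases are positive; and since $k_1-1>-1$, $k_2-1>-1$, the endpoint singularities are integrable, so the integral converges.

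The one factor whose sign is not immediate is $\sigma(x,y,z)$, and this is where the work lies. I would first simplify \eqref{sigmaxyz} by means of $e^{\frac x2}(2\cosh\tfrac x2)=e^{x}+1$ and $e^{-\frac y2}(2\cosh\tfrac z2)=e^{(z-y)/2}+e^{-(z+y)/2}$, obtaining $\sigma(x,y,z)=(\sign x)\{(e^{x}+1)-(e^{(z-y)/2}+e^{-(z+y)/2})\}$, and then treat the two signs of $x$ separately. If $x>0$, the constraints $z<x$ and $|y|<x$ give $z-y<2x$ and $z+y>0$, whence $e^{(z-y)/2}<e^{x}$ and $e^{-(z+y)/2}<1$, so the brace is positive and $\sigma>0$. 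If $x<0$, then $z<-x$ and $y\le|y|<-x$ give $z+y<-2x$ and $z-y>0$, whence $e^{-(z+y)/2}>e^{x}$ and $e^{(z-y)/2}>1$, so the brace is negative and again $(\sign x)\cdot(\text{brace})=\sigma>0$. Thus $\sigma(x,y,z)>0$ throughout $|y|<z<|x|$.

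Combining the two preceding steps, the integrand in \eqref{IntegralExpression} is strictly positive on the non-degenerate interval $(|y|,|x|)$, so $\K(x,y)>0$ whenever $|x|>|y|$. Finally, positivity of the operators follows at once from Theorem \ref{MainTheorem}: for $f\ge0$ one has $\V f(x)=\int_{|y|<|x|}\K(x,y)\,f(y)\,dy\ge0$, and for $g\ge0$ one has $\Vt g(y)=\int_{|x|>|y|}\K(x,y)\,g(x)\,A(x)\,dx\ge0$, since in each case one integrates a nonnegative function against the positive kernel and the positive measure $A(x)\,dx$. The main obstacle is the sign analysis of $\sigma$; everything else is bookkeeping.
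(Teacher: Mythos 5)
Your proposal is correct and takes essentially the same approach as the paper: the only nontrivial point in both is the strict positivity of $\sigma(x,y,z)$ for $|y|<z<|x|$, established by a case analysis on $\sign x$ (you expand into exponentials, the paper uses monotonicity of $\cosh$ --- an immaterial difference), with all other factors and the passage to positivity of $\V$ and $\Vt$ handled identically.
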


\begin{proof}
Let us check the positivity of \eqref{sigmaxyz}
when \ssf$x,\vsb y,\vsb z\hspace{-.5mm}\in\hspace{-.5mm}\R$
\ssf satisfy \ssf$|x|\!>\hspace{-.5mm}z\hspace{-.5mm}>\!|y|$\ssf.
On the one hand, if \ssf$x\hspace{-.5mm}>\hspace{-.5mm}0$\ssf, then
\begin{align*}
\sigma(x\vsf,\vsb y\vsf,\ssb z)
&=\vsf e^{\ssf\frac x2}\hspace{.5mm}(\vsf2\cosh\tfrac x2\vsf)\vsb
-e^{-\frac y2}\hspace{.5mm}(\vsf2\cosh\tfrac z2\vsf)\\
&>(\vsf e^{\ssf\frac x2}\hspace{-.5mm}-\ssb e^{-\frac y2})\,
(\vsf2\cosh\tfrac x2\vsf)>\vsf0\,.
\end{align*}
On the other hand, if \ssf$x\hspace{-.5mm}<\hspace{-.5mm}0$\ssf, then
\begin{align*}
\sigma(x\vsf,\vsb y\vsf,\ssb z)
&=\vsf e^{-\frac y2}\hspace{.5mm}(\vsf2\cosh\tfrac z2\vsf)\ssb
-e^{\ssf\frac x2}\hspace{.5mm}(\vsf2\cosh\tfrac x2\vsf)\\
&>\vsf e^{-\frac y2}\hspace{.5mm}(\vsf2\cosh\tfrac y2\vsf)\ssb
-e^{\ssf\frac x2}\hspace{.5mm}(\vsf2\cosh\tfrac x2\vsf)
=\vsf e^{-y}\ssb-e^{\ssf x}>\vsf0\,.
\end{align*}
\vspace{-7mm}

\end{proof}

\begin{remark}
As already observed in \ssf\cite{GallardoTrimeche2010}\vsf,
the positivity of \eqref{IntegralExpressionLimit1},
respectively \eqref{IntegralExpressionLimit2}
is im\-me\-di\-ate in the limit case where
\,$k_1\hspace{-1mm}=\hspace{-.5mm}0$
and \,$k_2\!>\hspace{-.5mm}0$\ssf,
respectively \,$k_2\!=\hspace{-.5mm}0$
and \,$k_1\hspace{-1mm}>\hspace{-.5mm}0\ssf$.
\end{remark}

\begin{remark}
The positivity of \,$\V$ was mistakenly disproved in
\ssf\cite[\!Theorem\;2.11]{GallardoTrimeche2010} \ssb
when \,$k_1\hspace{-1mm}>\hspace{-.5mm}0$ and \,$k_2\!>\hspace{-.5mm}0$\ssf.
More precisely,
by using a more complicated formula than \eqref{IntegralExpression},
the density \,$\K(x,y)$ was shown to be negative,
when \hspace{.5mm}$x\hspace{-.5mm}>\hspace{-.5mm}0$
and \hspace{.5mm}$y\!\searrow\!-\ssf x$\ssf.
The error in the proof lies in the expression \hspace{.5mm}$A_1$,
which is equal to
\hspace{.5mm}$\tfrac{k\vphantom{|}}{k^{\ssf\prime}\vphantom{|}}
\hspace{.5mm}\tfrac{\sinh(2\ssf x)-\ssf\sinh(2|y|)}{E\vphantom{|}}$
\ssf and which tends to
$+\hspace{.5mm}2\hspace{.5mm}
\smash{\tfrac{k\vphantom{|}}{k^{\ssf\prime}\vphantom{|}}}\hspace{.5mm}
\smash{\tfrac{\cosh(2\ssf x)\vphantom{|}}{\sinh(2\ssf x)\vphantom{|}}}
\ssb>\ssb0\ssf$.
\end{remark}

\begin{remark}
A different approach, based on the positivity of a heat type kernel,
was used in \ssf\cite{Trimeche2012}\vsf, \cite{Trimeche2015a}\vsf,
\cite{Trimeche2015b} \ssb and \ssf\cite{Trimeche2016}
in order to tackle the positivity of \,$\V$.
While \ssf\cite{Trimeche2012} \ssb may be right,
the same flaw occurs in \ssf\cite{Trimeche2015a}\vsf,
\cite{Trimeche2015b}\vsf, \cite{Trimeche2016}\vsf,
namely the cut--off \,$\mathbf{1}_{\vsf Y_\ell}$
\ssb breaks down the differential--difference equations,
which are not local.
\end{remark}

In conclusion, the present note settles in a simple way the positivity issue in dimension 1
and hence in the product case. Otherwise,
the positivity of the interwining operator \ssf$\V$ and its dual \ssf$\Vt$,
when the multiplicity function \ssf$k$ \ssf is \ssf$\ge\hspace{-.5mm}0$\ssf,
remains an open problem in higher dimensions.

\end{document}